%%%%%%%%%%%%%%%%%%%%%%%%%%%%%%%%%%

\documentclass[12pt]{article}
\usepackage{amsmath,amssymb,amsbsy,amsfonts,amsthm,latexsym,
                    amsopn,amstext,amsxtra,euscript,amscd}

\newtheorem{thm}{Theorem}
\newtheorem{lem}[thm]{Lemma}

%% DEFINITIONS

\newcommand{\be}{\begin{equation}}
\newcommand{\ee}{\end{equation}}
\newcommand{\benn}{\begin{equation*}}
\newcommand{\eenn}{\end{equation*}}

\renewcommand{\a}{\ensuremath{\alpha}}

\renewcommand{\(}{\left(}
\renewcommand{\)}{\right)}

\def\\{\cr}
\def\[{\left[}
\def\]{\right]}
\def\<{\langle}
\def\>{\rangle}

%%%%%%%%%%%%%%%%%%%%%%%%%
% Alphabet calligraphic %
%%%%%%%%%%%%%%%%%%%%%%%%%
\def\cA{{\mathcal A}}

\def\cD{{\mathcal D}}
\def\cE{{\mathcal E}}
\def\cF{{\mathcal F}}
\def\cG{{\mathcal G}}
\def\cH{{\mathcal H}}
\def\cI{{\mathcal I}}
\def\cJ{{\mathcal J}}

\def\cM{{\mathcal M}}

\def\cS{{\mathcal S}}
\def\cT{{\mathcal T}}

%%%%%%%%%%%%%%%%%%%%%%%

\def\notdivides{\mathrel{\kern-3pt\not\!\kern3.5pt\bigm|}}

\begin{document}

\title{On the largest prime factor of the Mersenne numbers}

\author{
{\sc Kevin Ford}\\
{Department of Mathematics}\\
{The University of Illinois at Urbana-Champaign Urbana}\\
{Champaign, IL 61801, USA}\\
{ford@math.uiuc.edu} \and
{\sc Florian~Luca} \\
{Instituto de Matem{\'a}ticas}\\
{ Universidad Nacional Autonoma de M{\'e}xico} \\
{C.P. 58089, Morelia, Michoac{\'a}n, M{\'e}xico} \\
{fluca@matmor.unam.mx} \\
\\
{\sc Igor E.~Shparlinski} \\
{Department of Computing}\\
{Macquarie University} \\
{Sydney, NSW 2109, Australia} \\
{igor@ics.mq.edu.au}}

\date{}
\maketitle

\begin{abstract}
Let $P(k)$ be the largest prime factor of the positive integer $k$.
In this paper, we prove that the series
%%KF2
$$
\sum_{n\ge 1}\frac{(\log n)^{\a}}{P(2^n-1)}
$$
is convergent for each constant $\a<1/2$, which gives a more precise
form of a result of C.~L.~Stewart of 1977.
\end{abstract}

\section{Main Result}

Let $P(k)$ be the largest prime factor of the positive integer $k $.
The quantity $P(2^n-1)$ has been investigated by many authors
(see~\cite{BV,EKP,ES,MP,MW,Pom1,Sch,S1,S2}). For example, the best
known lower bound
$$P(2^n-1)\ge 2n+1, \qquad
{\text{for}}~n\ge 13$$ is due to Schinzel~\cite{Sch}. No better
bound is known even for all sufficiently large values of $n$.

C.~L.~Stewart~\cite{S1,S2} gave better bounds provided that $n$
satisfies certain arithmetic or combinatorial properties. For
example, he showed in~\cite{S2}, and this was also proved
independently by Erd\H os and Shorey in~\cite{ES}, that
$$
P(2^p-1)> cp \log p
$$
holds for all sufficiently large prime numbers $p$, where $c>0$ is
an absolute constant and $\log$ is the natural logarithm. This was
an improvement upon a previous result of his from~\cite{S1} with
$(\log p)^{1/4}$ instead of $\log p$. Several more results along
these lines are presented in Section~\ref{sec:motiv}.

Here, we continue to study $P(2^n-1)$ from a point of view familiar
to number theory which has not yet been applied to $P(2^n-1)$. More
precisely, we study the convergence of the series
\begin{equation}
\label{eq:ser} \sigma_\alpha = \sum_{n\ge 1}\frac{(\log
n)^\alpha}{P(2^n-1)}
\end{equation}
for some real parameter $\alpha$.

Our result is:

\begin{thm}
\label{thm:main} The series $\sigma_\alpha$ is convergent for all
$\alpha<1/2$.
\end{thm}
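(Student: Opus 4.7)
The plan is to pass from a sum over $n$ to a sum over the prime $p = P(2^n-1)$, and bound the number of preimages of each $p$. Set $d_p = \ord_p(2)$. If $P(2^n-1)=p$ then $d_p \mid n$, and Schinzel's bound $P(2^n-1) \ge 2n+1$ (valid for $n \ge 13$) gives $n \le (p-1)/2$; hence the number of such $n$ is at most $\lfloor (p-1)/(2 d_p)\rfloor$ and each satisfies $\log n \le \log p$. Absorbing the bounded contribution from the initial segment, this yields
\[
\sigma_\alpha \ll 1 + \sum_p \frac{(\log p)^\alpha}{2\,d_p}.
\]

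Next I would reparametrize by $d = d_p$. The primes $p$ with $\ord_p(2)=d$ are the primitive prime divisors of $\Phi_d(2)$; they satisfy $p \equiv 1 \pmod d$ (so $p \ge d+1$) and their product divides $\Phi_d(2) \le 3^{\phi(d)}$. These two inputs bound both $\#\{p:\ord_p(2)=d\} \ll \phi(d)/\log d$ and $\sum_{p:\ord_p(2)=d}\log p \ll \phi(d)$. Applying Jensen's inequality (valid since $\alpha < 1$) then yields
\[
\sum_{p:\ord_p(2)=d}(\log p)^\alpha \ll \frac{\phi(d)}{(\log d)^{1-\alpha}},
\]
so that $\sigma_\alpha \ll 1 + \sum_d \phi(d)/(d(\log d)^{1-\alpha})$.

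The main obstacle is that this final outer sum converges only for $\alpha<0$, and is already insufficient even for the case $\alpha=0$. The slackness is in the bound $N_p \le (p-1)/(2 d_p)$ on $N_p = \#\{n : P(2^n-1)=p\}$: this uses only the congruence $d_p \mid n$, whereas the hypothesis $P(2^n-1)=p$ also forces every prime factor of $2^n-1$ to be $\le p$. In particular, for each divisor $d'\mid n$ the primitive prime factor of $\Phi_{d'}(2)$ must be $\le p$; and by the Erd\H os--Shorey--Stewart theorem cited in the introduction, $P(2^q-1) \gg q\log q$ whenever $q$ is prime, so every prime divisor of $n$ must be $\ll p/\log p$, i.e.\ $n$ must be $p/\log p$--smooth. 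Exploiting this, I would split the analysis according to the size of the largest prime factor $P(d)$ of $d$: when $P(d)$ is a sizeable fraction of $d$, Stewart's extra logarithm supplies the crucial saving; when $P(d)$ is small, $d$ is itself smooth and standard Dickman-type estimates control the count of such $d$ in each dyadic window. The threshold $\alpha<1/2$ in the theorem should emerge from the optimal balance between these two regimes.
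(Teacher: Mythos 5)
Your opening reduction is sound as far as it goes, and you are right that it fails; but the repair you propose does not close the gap, and the reason is quantitative. The only new constraint you extract beyond $d_p\mid n$ and $n\le (p-1)/2$ is that every prime $q\mid n$ satisfies $P(2^q-1)\le p$, hence $q\ll p/\log p$ by Erd\H os--Shorey--Stewart; i.e.\ $n$ is $y$-smooth with $y\asymp p/\log p$. But for $x=p/2$ and $y=p/\log p$ the smoothness parameter is $u=\log x/\log y\to 1$, so $\Psi(x,y)=\rho(u)x(1+o(1))$ with $\rho(u)=1-\log u\to 1$: the constraint excludes only a proportion $O(\log\log p/\log p)$ of the multiples of $d_p$ below $p/2$, and the bound $N_p\le p/(2d_p)$ is essentially unimproved. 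Concretely, your approach cannot rule out the scenario in which, for each prime $d$, the number $2^d-1$ has a prime factor $p\approx d^{3/2}$ and a positive proportion of the $\approx d^{1/2}$ multiples $n$ of $d$ below $p/2$ satisfy $P(2^n-1)=p$; that scenario alone contributes $\gg\sum_{d\ \mathrm{prime}}(\log d)^{\alpha}/d=\sum_d(\log d)^{\alpha-1}(\log d/d)$, which diverges for every $\alpha\ge 0$. The splitting by $P(d_p)$ does not help either: the troublesome primes are precisely those with $d_p\le p/\log p$, for which the condition $P(d_p)\ll p/\log p$ is automatic, and the Dickman estimates you invoke would count the moduli $d$, not the integers $n$.

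What is missing is a mechanism that forces $2^n-1$ to have \emph{many} distinct prime factors $\equiv 1\pmod n$ unless $n$ has an unusual multiplicative structure. The paper's engine is exactly this: a lemma of Stewart (from lower bounds for linear forms in logarithms) giving $\#\{p\mid 2^n-1:\ p\equiv 1\ (\mathrm{mod}\ n)\}\gg \log\bigl(2+\Delta(n)/\tau(n)\bigr)/\log\log P(2^n-1)$, where $\Delta(n)$ measures the largest gap between consecutive divisors of $n$. If $P(2^n-1)\le n(\log n)^{1+\alpha}(\log\log n)^2$, then either $2^n-1$ has many prime factors $dn+1$ with $d$ confined to a short range --- an event killed by a Brun sieve bound for several simultaneous primes $d_in+1$ --- or else $\Delta(n)$ is small, i.e.\ $n$ has dense divisors, and Saias's theorem ($\#\{n\le x:\Delta_0(n)\le z\}\asymp x\log z/\log x$) combined with one more sieve step shows such $n$ are rare enough. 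These three inputs (Stewart's lemma, Saias's count, and the sieve) are the actual content of the proof, and none of them is replaceable by the smoothness observation you propose; you would need to import something of comparable strength to reach any $\alpha\ge 0$, let alone all $\alpha<1/2$.
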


The rest of the paper is organized as follows. We introduce some
notation in Section~\ref{sec:notation}. In Section~\ref{sec:motiv},
we comment on why Theorem~\ref{thm:main} is interesting and does not
immediately follow from already known results. In
Section~\ref{sec:fund}, we present a result C.~L.~Stewart~\cite{S2}
which plays a crucial role in our argument. Finally, in
Section~\ref{sec:proof}, we give a proof of Theorem~\ref{thm:main}.

\section{Notation}
\label{sec:notation}

In what follows, for a positive integer $n$ we use $\omega(n)$ for
the number of distinct prime factors of $n$, $\tau(n)$ for the
number of divisors of $n$ and $\varphi(n)$ for the Euler function of
$n$. We use the Vinogradov symbols $\gg$, $\ll$ and $\asymp$ and the
Landau symbols $O$ and $o$ with their usual meaning. The constants
implied by them might depend on $\alpha$. We use the letters $p$ and
$q$ to denote prime numbers. Finally, for a subset $\cA$ of positive
integers and a positive real number $x$ we write $\cA(x)$ for the
set $\cA\cap [1,x]$.

\section{Motivation}
\label{sec:motiv}

In~\cite{S2}, C.~L.~Stewart proved the following two statements:

\begin{description}
\item[A.] If $f(n)$ is any positive real valued function which is
increasing and $f(n)\to\infty$ as $n\to\infty$, then the inequality
$$
P(2^n-1)>\frac{n(\log n)^2}{f(n)\log\log n}
$$
holds for all positive integers $n$ except for those in a set of
asymptotic density zero.

\item[B.] Let $\kappa<1/\log 2$ be fixed. Then the inequality
$$
P(2^n-1)\ge C(\kappa)  \frac{\varphi(n)\log n}{2^{\omega(n)}}
$$
holds for all positive integers $n$ with $\omega(n)<\kappa\log\log
n$, where $C(\kappa) > 0$ depends on $\kappa$.
\end{description}

Since for every fixed $\varepsilon>0$  we have
$$
\sum_{n\ge 2}\frac{\log \log n}{n(\log n)^{1+\varepsilon}} < \infty,
$$
the assertion {\bf A} above, taken with $f(n)=(\log
n)^{\varepsilon}$ for fixed some small positive $\varepsilon <
1-\alpha$, motivates our Theorem~\ref{thm:main}. However, since
C.~L.~Stewart~\cite{S2} gives no analysis of the exceptional set in
the assertion {\bf A} (that is,  of the size of the set of numbers
$n\le x$ such that the corresponding  estimate fails for a
particular choice of $f(n)$), this alone does not lead to a proof of
Theorem~\ref{thm:main}.

In this respect, given that the distribution of positive integers
$n$ having a fixed number of prime factors $K<\kappa\log\log n$ is
very well-understood starting with the work of Landau and continuing
with the work of Hardy and Ramanujan~\cite{HarRam}, it may seem that
the assertion {\bf B} is  more suitable  for our purpose. However,
this is not quite so either since most $n$ have
$\omega(n)>(1-\varepsilon)\log\log n$ and for such numbers the lower
bound on $P(2^n-1)$ given by {\bf B}  is only of the shape
$\varphi(n)(\log n)^{1-(1-\varepsilon)\log 2}$ and this is not
enough to guarantee the convergence of series~\eqref{eq:ser} even
with $\alpha = 0$.

Conditionally, Murty and Wang~\cite{MW} have shown   the
$ABC$-conjecture implies that $P(2^n-1)>n^{2-\varepsilon}$ for all
$\varepsilon>0$ once $n$ is sufficiently large with respect to
$\varepsilon$. This certainly implies the conditional convergence of
series~\eqref{eq:ser} for all fixed $\alpha>0$. Murata and
Pomerance~\cite{MP} have proved, under the Generalized Riemann
Hypothesis for various Kummerian fields, that the inequality
$P(2^n-1)>n^{4/3}/\log\log n$ holds for almost all $n$, but they did
not give explicit upper bounds on the size of the exceptional set
either.

\section{Main Tools}
\label{sec:fund}

As we have mentioned in Section~\ref{sec:motiv}, neither assertion
{\bf A} nor {\bf B} of Section~\ref{sec:motiv} are directly suitable
for our purpose.  However, another criterion, implicit in the work
of C.~L.~Stewart~\cite{S2} and which we present as
Lemma~\ref{lem:Div} below (see also Lemma~3 in~\cite{MP}), plays an
important role in our proof.

\begin{lem}
\label{lem:Div} Let $n\ge 2$, and let $d_1 < \cdots < d_\ell$ be all
$\ell = 2^{\omega(n)}$ divisors of $n$ such that $n/d_i$ is
square-free. Then  for all $n>6$,
$$
\# \{p \mid 2^n -1 \ : \ p\equiv 1\pmod n\} \gg
  \frac{
\displaystyle\log\( 2 + \frac{\Delta(n)}{\tau(n)} \)}{\log\log
P(2^n-1)},
$$
where
$$
\Delta(n) =  \max_{i=1, \ldots, \ell-1} d_{i+1}/d_i.
$$
\end{lem}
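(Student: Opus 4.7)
Every prime $p\mid 2^n-1$ with $\ord_p(2)=n$ automatically satisfies $p\equiv 1\pmod n$ (since $\ord_p(2)\mid p-1$). By the Bang--Zsigmondy theorem, which applies for $n>6$, these primes are, up to at most one intrinsic exception, exactly the distinct prime divisors of the cyclotomic factor $\Phi_n(2)$ of $2^n-1$. Therefore $\omega(\Phi_n(2))-O(1)$ is a lower bound on the cardinality in the lemma, and it suffices to prove the analogous bound on $\omega(\Phi_n(2))$.

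Writing $\Phi_n(2)=\prod_q q^{v_q}$, each prime $q$ satisfies $q\le P:=P(2^n-1)$ and $q>n$ (since $q\equiv 1\pmod n$). Therefore
\benn
\log\Phi_n(2) = \sum_q v_q\log q \le \omega(\Phi_n(2))\cdot V\log P,
\eenn
where $V=\max_q v_q$. By lifting-the-exponent, $v_q(\Phi_n(2))=v_q(2^n-1)=1$ except when $q$ satisfies a ``Wieferich-like'' congruence $2^n\equiv 1\pmod{q^2}$; using the trivial bound $q^{v_q}\le 2^n$, one obtains $V\ll\log P/\log\log P$.

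The key step is extracting the factor $\log(2+\Delta(n)/\tau(n))$ in the numerator. This refined information uses the explicit representation
\benn
\Phi_n(2) = \prod_{i=1}^\ell (2^{d_i}-1)^{\mu(n/d_i)}
\eenn
via M\"obius inversion of $2^n-1=\prod_{d\mid n}\Phi_d(2)$, where the product is supported exactly on the $\ell=2^{\omega(n)}$ divisors $d_i$ appearing in the hypothesis (as $\mu(n/d)=0$ otherwise). A pigeonhole argument on the divisors of $n$, keyed to the largest consecutive ratio $\Delta(n)=d_{i_0+1}/d_{i_0}$ and normalised by the divisor count $\tau(n)$, upgrades the crude estimate into the stated form.

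The main obstacle is this combinatorial pigeonhole step. The identity $\log\Phi_n(2)=\varphi(n)\log 2+O(1)$ is insensitive to the divisor gap structure of $n$, so the improvement must come from a finer analysis that separates the contribution of the ``wall'' pair $(d_{i_0},d_{i_0+1})$ from the other $\tau(n)-1$ divisors, in the spirit of Stewart's original argument, before coupling it with the multiplicity bound $V\ll\log P/\log\log P$ above.
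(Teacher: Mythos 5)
Your outline leaves the lemma unproved at precisely the point where its content lies, and you say so yourself: the ``combinatorial pigeonhole step'' that is supposed to produce the factor $\log(2+\Delta(n)/\tau(n))$ is never supplied, and is flagged as ``the main obstacle.'' Worse, your own (correct) observation that $\log\Phi_n(2)=\varphi(n)\log 2+O(1)$ is insensitive to the gap structure of the divisors shows that no elementary rearrangement of the identity $\Phi_n(2)=\prod_i(2^{d_i}-1)^{\mu(n/d_i)}$ together with the bound $q\le P$ can ever produce a dependence on $\Delta(n)$: the sizes of the factors determine only $\varphi(n)$. (A secondary slip: from $q^{v_q}\le 2^n$ and $q>n$ you get only $v_q\ll n/\log n$, not $V\ll\log P/\log\log P$; but this is moot since the framework cannot reach the target anyway.)

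The missing ingredient is transcendence theory, not combinatorics. The paper does not reprove the lemma; it quotes it from Stewart and from Murata--Pomerance, and states explicitly that the proof ``uses the original lower bounds for linear forms in logarithms of algebraic numbers due to Baker.'' The mechanism, in outline: let $i_0$ realize $\Delta(n)=d_{i_0+1}/d_{i_0}$ and consider
$$
\Lambda=\sum_{i>i_0}\mu(n/d_i)\log(2^{d_i}-1)-\Big(\sum_{i>i_0}\mu(n/d_i)\,d_i\Big)\log 2=\sum_{i>i_0}\mu(n/d_i)\log\big(1-2^{-d_i}\big),
$$
which is nonzero and satisfies $|\Lambda|\le\tau(n)\,2^{1-d_{i_0+1}}$. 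Since $\prod_{i>i_0}(2^{d_i}-1)^{\mu(n/d_i)}$ equals $\Phi_n(2)$ divided by the product over $i\le i_0$, the quantity $\Lambda$ is a linear form in the logarithms of $2$, of the $m$ primes $p_j\le P(2^n-1)$ dividing $\Phi_n(2)$, and of the integers $2^{d_i}-1\le 2^{d_{i_0}}$ with $i\le i_0$. Baker's theorem gives a lower bound for $|\Lambda|$ which is exponential in a quantity involving the product of the heights' logarithms, hence of size roughly $\exp\big(-(C\log P)^{c\,m}\cdot(\hbox{terms in }d_{i_0},\tau(n)\big)\big)$; comparing with the upper bound forces $\log\big(d_{i_0+1}/(d_{i_0}\tau(n))\big)\ll m\log\log P+\cdots$, i.e.\ $m\gg\log(2+\Delta(n)/\tau(n))/\log\log P(2^n-1)$. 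This is where both the ratio $\Delta(n)/\tau(n)$ and the $\log\log P$ in the denominator come from, and without an input of this kind your argument cannot be completed.
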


The proof of C.~L.~Stewart~\cite{S2} of Lemma~\ref{lem:Div}
uses the original lower bounds for linear forms in logarithms of
algebraic numbers due to Baker. It is interesting to notice that
following~\cite{S2} (see also~\cite[Lemma 3]{MP}) but using instead
the sharper lower bounds for linear forms in logarithms due to
E.~M.~Matveev~\cite{Matv}, does not seem to lead to any
improvement of  Lemma~\ref{lem:Div}.

Let $1=d_1<d_2<\cdots<d_{\tau(n)}=n$ be all the divisors of $n$
arranged in increasing order and let
$$
\Delta_0(n)=\max_{i\le \tau(n)-1} d_{i+1}/d_i.
$$
Note that $\Delta_0(n)\le \Delta(n)$.

We need the following result of E.~Saias~\cite{Sa} on the distribution
of positive integers $n$ with ``dense divisors''. Let
$$
\cG(x,z)=\{n\le x\ : \ \Delta_0(n)\le z\}.
$$

\begin{lem}
\label{lem:saias} The bound
$$
\#\cG(x,z)\asymp x\frac{\log z}{\log x}
$$
holds uniformly for $x\ge z\ge 2$.
\end{lem}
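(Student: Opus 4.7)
I would prove the upper and lower bounds in Lemma~\ref{lem:saias} separately, treating the upper bound as the main task.

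For the upper bound $\#\cG(x,z) \ll x (\log z)/\log x$, the idea is an induction on $x$ via decomposition by the largest prime factor. If $n \in \cG(x,z)$ and we write $n = m p^a$ with $p = P(n)$ and $\gcd(m,p) = 1$, then the divisor set of $n$ splits into $a+1$ shifts $\{p^i d : d \mid m\}$ of the divisor set of $m$. The density condition $\Delta_0(n)\le z$ forces two things: first, $m$ itself must lie in $\cG(m,z)$, since consecutive divisors of $m$ are also consecutive divisors of $n$; and second, the ratio from the largest divisor of $m$ strictly below $p$ to $p$ is at most $z$, so $p \le zm$. This yields a recursion of the schematic form
$$
G(x,z) \;\le\; \sum_{\substack{m\in\cG(x,z) \\ p\text{ prime},\,P(m)<p\le zm,\,a\ge 1 \\ mp^a\le x}} 1 \;+\; O(1),
$$
where $G(x,z) = \#\cG(x,z)$. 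Evaluating the prime sum by Chebyshev/PNT estimates and inserting the inductive hypothesis $G(y,z) \le Cy(\log z)/\log y$ for all $y < x$ then closes the induction once $C$ is chosen large enough to absorb the $O(1)$ terms and the error in PNT.

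For the lower bound $\#\cG(x,z) \gg x (\log z)/\log x$, I would construct many valid integers recursively. Given $m\in\cG(x/p,z)$ with largest prime factor $P(m)=q$, any prime $p$ in the interval $(q,\,zq]$ with $mp\le x$ produces an integer $n=mp\in\cG(x,z)$: the new divisors $p\cdot d$ (for $d\mid m$) interleave the old divisors with all ratios bounded by $z$. Iterating this construction starting from $n=1$ and counting the resulting chains via Mertens-type estimates $\sum_{q<p\le zq}1/p \asymp \log\log(zq)-\log\log q$ in each step yields the claimed lower order $\gg x(\log z)/\log x$, in parallel to how one counts smooth numbers by the Hildebrand--Tenenbaum recursion.

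The main obstacle is the bookkeeping in the upper-bound induction: the various cases --- small versus large $P(n)$, multiple exponents $a$, and the transition regime where $z$ is comparable to $x$ (which must serve as the base case, using the trivial estimate $G(x,x)=\lfloor x\rfloor$) --- need to be combined with a single constant $C$ that does not blow up. A secondary concern in the lower bound is verifying that distinct chains produce distinct integers; this follows because every $n\in\cG(x,z)$ admits a unique ``greedy'' factorization $n = p_1 p_2\cdots p_k$ in which $p_j$ is the largest prime factor of $n/(p_{j+1}\cdots p_k)$, so the recursive construction is essentially a bijection onto $\cG(x,z)$.
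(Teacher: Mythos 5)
This lemma is not proved in the paper at all: it is quoted from Saias \cite{Sa} (``Entiers \`a diviseurs denses~I''), where the two-sided estimate, uniform in $2\le z\le x$, is the main theorem of a substantial article building on Tenenbaum's earlier work on integers with dense divisors. Your starting point --- factor out $P(n)$ and use the characterization $p_i\le z\prod_{j<i}p_j^{e_j}$ (which is exactly the paper's Lemma~\ref{lem:Struct}) to set up a Buchstab-type recursion --- is the right one, but neither half of your sketch closes. A small point first: your justification that $\Delta_0(m)\le z$ because ``consecutive divisors of $m$ are also consecutive divisors of $n$'' is false (the new divisors $p^i d$ interleave the old ones: for $n=2\cdot 7\cdot 11$ the consecutive divisors $7,14$ of $m=14$ are separated by $11$ in $n$); the implication itself is true, but only via Lemma~\ref{lem:Struct}. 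More seriously, in the upper bound the quantity you must sum, $\#\{p:\,P(m)<p\le\min(zm,x/m)\}$ over $m\in\cG(\cdot,z)$, is not controlled by the inductive hypothesis $G(y,z)\le Cy\log z/\log y$ in any direct way: you need auxiliary bounds on weighted sums such as $\sum_{m\in\cG(t,z)}1/\log(x/m)$ and $\sum_{m\in\cG(t,z)}m$, and the crude estimates lose logarithms in the regimes $z$ bounded and $z=x^{\theta}$; obtaining uniformity over the whole range $2\le z\le x$ with a single constant is precisely the hard content of Saias's theorem, not a bookkeeping issue.

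The lower bound is where the gap is largest. Your chain construction takes $p_{j+1}\in(p_j,zp_j]$, but the actual membership condition only requires $p_{j+1}\le z\,p_1\cdots p_j$, which is vastly weaker. With your restricted choice, $p_j\le z^{j+1}$, so reaching $n\asymp x$ needs $k\gtrsim\log x/\log z$ steps, and the Mertens factors $\sum_{p_j<p\le zp_j}1/p\asymp\min(1,\log z/\log p_j)$ multiply to something super-exponentially small in $k$ --- far below $x\log z/\log x$ when $z$ is bounded. Even with the correct (unrestricted) recursion, extracting the lower bound of the right order requires comparing the Buchstab functional equation with an associated integral equation (or with the known asymptotics for $\cG(x,2)$, itself a nontrivial theorem of Tenenbaum of strength comparable to counting smooth numbers); the ``essentially a bijection'' remark only identifies the parametrization, it does not do the counting. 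In short, the proposal is a heuristic outline of a deep cited result, not a proof, and the paper is right to invoke \cite{Sa} rather than attempt one.
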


Next we address the structure of  integer with $\Delta_0(n) \le z$.
In what follows, as usual, an empty product is,  by convention,
equal to $1$.

\begin{lem}
\label{lem:Struct} Let $n=p_1^{e_1}\cdots p_k^{e_k}$ be the prime
number factorization of a positive integer $n$,  such that 
$p_1<\cdots<p_k$. Then $\Delta_0(n) \le z$
if and only if for each $i\le k$, the inequality
$$
p_i\le z\prod_{j<i} p_j^{e_j}
$$
holds.
\end{lem}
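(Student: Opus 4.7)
The plan is to prove both directions of the equivalence. The forward direction is essentially immediate, while the converse requires induction on $k$, the number of distinct prime factors of $n$. Throughout, write $M_{i-1} = \prod_{j<i} p_j^{e_j}$, which is itself a divisor of $n$.

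For the forward direction, assume $\Delta_0(n) \le z$ and fix $i$. The inequality $p_i \le z M_{i-1}$ is automatic when $p_i \le M_{i-1}$. Otherwise, any divisor of $n$ lying strictly between $M_{i-1}$ and $p_i$ would have to involve a prime $p_j$ with $j \ge i$ (otherwise it would divide $M_{i-1}$), and hence would be at least $p_i$, a contradiction. Thus $p_i$ is the divisor of $n$ immediately following $M_{i-1}$, so $p_i/M_{i-1} \le \Delta_0(n) \le z$.

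For the converse I would proceed by induction on $k$. The case $k = 1$ is immediate, since the divisors of $p_1^{e_1}$ form a geometric progression of ratio $p_1$. For the inductive step, take any divisor $d$ of $n$ with $d < n$, write $d = \prod_j p_j^{a_j}$, and let $i$ be the smallest index with $a_i < e_i$, so that $M_{i-1}$ divides $d$. Let $e^{*}$ denote the largest divisor of $M_{i-1}$ strictly less than $p_i$, with the convention that $e^{*} = M_{i-1}$ if $p_i > M_{i-1}$, and define $d' = (d/e^{*}) \cdot p_i$. A direct inspection of prime exponents shows $d' \mid n$; moreover $d' > d$ and $d'/d = p_i/e^{*}$, so everything reduces to bounding $p_i/e^{*}$ by $z$.

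If $p_i > M_{i-1}$, then $e^{*} = M_{i-1}$ and the bound is precisely the hypothesis at index $i$. If instead $p_i \le M_{i-1}$, the conditions of the lemma restricted to indices $< i$ are exactly the conditions for $M_{i-1}$, so the inductive hypothesis yields $\Delta_0(M_{i-1}) \le z$. Since $p_i$ is a prime distinct from $p_1, \ldots, p_{i-1}$, it is not itself a divisor of $M_{i-1}$, so $e^{*}$ and the smallest divisor $e_{*}$ of $M_{i-1}$ exceeding $p_i$ are consecutive divisors of $M_{i-1}$, giving $p_i/e^{*} < e_{*}/e^{*} \le z$. The main obstacle is precisely this last case, where one must invoke the inductive hypothesis to ensure that the divisors of $M_{i-1}$ crowd closely enough around $p_i$.
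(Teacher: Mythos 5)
Your proof is correct, and it takes a genuinely different (and more careful) route than the paper's. The paper dispatches necessity in one line by the same observation you make — when $p_i>\prod_{j<i}p_j^{e_j}$ these two quantities are consecutive divisors — and then proves sufficiency by induction on $k$ via the claim that $\Delta_0(m)\le z$ for $m=n/p_1^{e_1}$, followed by the remark $p_1\le z$. That sketch has a real gap: the hypotheses of the lemma for $m$ as a standalone integer (namely $p_i\le z\prod_{1<j<i}p_j^{e_j}$) are \emph{stronger} than what the hypotheses for $n$ provide, so the induction hypothesis does not apply to $m$; indeed for $n=30$ and $z=5/2$ the conditions hold for $n$ and $\Delta_0(30)=2\le z$, yet $\Delta_0(15)=3>z$. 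Your induction avoids this by applying the hypothesis to the initial segments $M_{i-1}=\prod_{j<i}p_j^{e_j}$, whose defining conditions genuinely are a subset of those for $n$, and then producing an explicit divisor $d'=(d/e^{*})p_i$ in the window $(d,zd]$ by trading the largest divisor $e^{*}$ of $M_{i-1}$ below $p_i$ for the prime $p_i$ itself. This is essentially the standard argument for integers with $z$-dense divisors (as in Saias and Tenenbaum), and it yields a complete proof where the paper's two-line induction does not; the only cost is the extra bookkeeping with $e^{*}$ and $e_{*}$, all of which you carry out correctly.
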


\begin{proof} The necessity is clear since otherwise the ratio
of the two  consecutive divisors
$$
\prod_{j<i} p_j^{e_j} \qquad \text{and} \qquad p_i
$$
is larger than $z$.

The sufficiency can be proved by induction on $k$. Indeed for $k=1$
it is trivial.  By the induction assumption, we also have $\Delta(m)
\le z$, where $m = n/p_1^{e_1}$. Remarking that $p_1 \le z$, we also
conclude that $\Delta(n) \le z$.
\end{proof}

\section{Proof of Theorem~\ref{thm:main}}
\label{sec:proof}

We put $\cE=\{n:\tau(n)\ge (\log n)^3\}$. To bound $\#\cE(x)$, let
$x$ be large and $n\le x$. We may assume that $n>x/(\log x)^2$ since
there are only at most $x/(\log x)^2$ positive integers $n\le
x/(\log x)^2$. Since $n\in \cE(x)$, we have that
$\tau(n)>(\log(x/\log x))^3>0.5(\log x)^3$ for all $x$ sufficiently
large. Since
$$
\sum_{n\le x}\tau (n)=O(x\log x)
$$
(see~\cite[Theorem~320]{HarWr}), we get that
$$
\#\cE(x)\ll \frac{x}{(\log x)^2}.
$$
By the Primitive Divisor Theorem (see~\cite{BV}, for example), there
exists a prime factor $p\equiv 1 \pmod n$ of $2^n-1$ for all $n>6$.
Then, by partial summation,
\begin{eqnarray*}
\sum_{n\in \cE(x)}\frac{(\log n)^\alpha}{P(2^n-1)}& \le & \sum_{n\in
\cE(x)}\frac{(\log n)^\alpha}{n}\le 1 + \int_2^x
    \frac{(\log t)^\alpha}{t} d{\#\cE(t)}\\
& \le &
1+\frac{\#\cE(x)}{x}+\int_2^x\frac{\#\cE(t) (\log t)^\alpha}{t^2}dt\\
& \ll &  1+\int_2^x \frac{dt}{t(\log t)^{2-\alpha}} \ll 1.
\end{eqnarray*}
Hence,
\begin{equation}
\label{eq:Sum E} \sum_{n\in \cE} \frac{(\log n)^{\alpha} }{P(2^n-1)}
< \infty.
\end{equation}

We now let $\cF=\{n:P(2^n-1)>n(\log n)^{1+\alpha}(\log\log n)^2\}.$
Clearly,
\begin{equation}
\label{eq:Sum F} \sum_{n\in \cF} \frac{(\log n)^{\alpha}
}{P(2^n-1)}\le \sum_{n\ge 1}\frac{1}{n \log n  (\log\log n)^2} <
\infty.
\end{equation}
  From now on, we assume that $n\not\in \cE\cup \cF$. For a given $n$,
we let
$$
\cD(n)=\{d: dn+1~{\text{\rm is~a~prime~factor~of}}~2^n-1\},
$$
and
$$
D^+(n) = \max\{d\in \cD(n)\}.
$$
Since $P(2^n -1) \ge d(n)n +1$, we have
\begin{equation}
\label{eq:bound d(n)} D^+(n)\le \(\log n\)^{1+\alpha}(\log\log n)^2.
\end{equation}

Further, we let $x_L=e^L$. Assume that $L$ is large enough. Clearly,
for $n\in [x_{L-1},x_L]$ we have  $D^+(n)\le L^{1+\alpha}(\log
L)^2$. We let $\cH_{d,L}$ be the set of $n\in [x_{L-1},x_L]$ such
that $D^+(n)=d$. We then note that by partial summation
\begin{equation}
\label{eq:SL}
\begin{split}
S_L & =  \sum_{\substack{x_{L-1}\le n\le x_L\\ n \not\in \cE\cup
\cF}}\frac{(\log n)^{\alpha}}{P(2^n-1)}\le  L^{\alpha}\sum_{d\le
L^{1+\alpha}(\log L)^2}\sum_{n\in \cH_{d,L}}\frac{1}{nd+1}\\ & <
\frac{L^{\alpha}}{x_{L-1}} \sum_{d\le L^{1+\alpha}(\log L)^2}
\frac{\#\cH_{d,L}}{d} \ll \frac{L^{\alpha}}{x_L} \sum_{d\le
L^{1+\alpha}(\log L)^2} \frac{\#\cH_{d,L}}{d}.
\end{split}
\end{equation}

We now   estimate $\#\cH_{d,L}$. We let $\varepsilon>0$ to be a
small positive number depending on $\alpha$ which is to be specified
later. We split $\cH_{d,L}$ in two subsets as follows:

Let $\cI_{d,L}$ be the set of $n\in \cH_{d,L}$ such that
$$
\#\cD(n)>\frac{1}{M}\(\log n\)^{\alpha+\varepsilon}(\log\log
n)^2>\frac{1}{M}L^{\alpha+\varepsilon}(\log L)^2,
$$
where $M=M(\varepsilon)$ is some positive integer depending on
$\varepsilon$ to be determined later. Since $D^+(n)\le
L^{1+\alpha}(\log L)^2$, there exists an interval of length
$L^{1-\varepsilon}$ which contains at least $M$ elements of
${\cD}(n)$. Let them be $d_0<d_1<\cdots<d_{M-1}$. Write
$k_i=d_i-d_0$ for $i=1,\ldots,M-1$. For fixed
$d_0,k_1,\ldots,k_{M-1}$, by the Brun sieve (see, for example,
Theorem~2.3 in~\cite{HR}),
\begin{equation}
\begin{split}
\label{eq:dd1d2} \#\{n   \in & [x_{L-1}, x_L]\ : \ d_in+1 \ \text{is
a prime for all}\
i=1,\ldots,M \}\\
&  \ll
  \frac{x_L}{(\log(x_L))^M} \prod_{p\mid
d_1\cdots d_M}\(1-\frac{1}{p}\)^{-M}\ll \frac{x_L}{L^M}
\(\frac{\prod_{i=1}^Md_i}{\varphi\(\prod_{i=1}^M
d_i\)}\)^M \\
& \ll  \frac{x_L(\log\log L)^M}{L^M},
\end{split}
\end{equation}
where we have used that  $\varphi(m)/m\gg 1/\log\log y$ in the
interval $[1,y]$ with $y=y_L=L^{1+\alpha}(\log L)^2$
(see~\cite[Theorem~328]{HarWr}). Summing up the
inequality~\eqref{eq:dd1d2} for all $d_0\le L^{1+\alpha}(\log L)^2$
and all $k_1,\ldots,k_{M-1}\le L^{1-\varepsilon}$, we get that the
number of $n \in \cI_{d,L}$ is at most
\begin{equation}
\label{eq:B11} \# \cI_{d,L} \ll   \frac{x_L(\log L)^{M+2}
L^{1+\alpha}L^{(M-1)(1-\varepsilon)}}{L^M}=\frac{x_L(\log
L)^{M+2}}{L^{(M-1)\varepsilon-\alpha}}.
\end{equation}
We now choose $M$ to be the least integer such that
$(M-1)\varepsilon>2+\alpha$, and with this choice of $M$ we get that
\begin{equation}
\label{eq:Ifinal} \# \cI_{d,L}\ll \frac{x_L}{L^2}.
\end{equation}
We now deal with the set $\cJ_{d,L}$ consisting of the numbers $n\in
\cH_{d,L}$ with $\#\cD(n)\le M^{-1}\(\log
n\)^{\alpha+\varepsilon}(\log\log n)^2$. To these, we apply
Lemma~\ref{lem:Div}. Since $\tau(n) < (\log n)^3$ and $P(2^n-1)<n^2$
for $n \in \cH_{d,L}$, Lemma~\ref{lem:Div} yields
$$
\log \Delta(n)/\log \log n \ll \#\cD(n) \ll \(\log
n\)^{\alpha+\varepsilon}(\log\log n)^2.
$$
Thus,
\begin{eqnarray*}
\log \Delta(n)  & \ll & \(\log n\)^{\alpha+\varepsilon}(\log\log n)^3 \\
& \ll & \(\log x_L\)^{\alpha+\varepsilon}(\log\log x_L)^3\ll
L^{\alpha+\varepsilon}(\log L)^3.
\end{eqnarray*}
Therefore
$$
\Delta_0(n)\le \Delta(n) \le z_L,
$$
where
$$
z_L=\exp(c  L^{\alpha+\varepsilon}(\log L)^3)
$$
and $c>0$ is some absolute constant.

We now further split $\cJ_{d,L}$ into two subsets. Let $\cS_{d,L}$
be the subset of $n\in \cJ_{d,L}$ such that $P(n)<x_L^{1/\log L}$.
 From known results concerning the distribution of smooth numbers
(see the corollary to Theorem~3.1 of~\cite{CEP}, or~\cite{HiTe},
\cite{Ten}, for example),
\begin{equation}\label{eq:S}
\#\cS_{d,L}\le \frac{x_L}{L^{(1+o(1))\log\log L}} \ll
\frac{x_L}{L^2}.
\end{equation}
Let $\cT_{d,L}=\cJ_{d,L}\backslash \cS_{d,L}$. For $n\in \cT_{d,L}$,
we have $n=qm$, where $q>x_L^{1/\log L}$ is a prime. Fix $m$. Then
$q<x_L/m$ is a prime such that $qdm+1$ is also a prime. By the Brun
sieve again,
\begin{equation}
\begin{split}
\label{eq:m} \#   \{ q \le x_L/m\ :  & \ q,
qdm+1\ \text{are primes}\}\\
& \ll  \frac{x_L}{m(\log(x_L/m))^2}\(\frac{md}{\varphi(md)}\) \ll
\frac{x_L (\log L)^3}{L^2 m},
\end{split}
\end{equation}
where in the above inequality we used the minimal order of the Euler
function in the interval $[1,x_LL^{1+\alpha}(\log L)^2]$ together
with the fact that
$$
\log(x_L/m) \ge \frac{\log x_L}{\log L}=\frac{L}{\log L}.
$$
We now sum up estimate~\eqref{eq:m} over all the allowable values
for $m$.

An immediate consequence of Lemma~\ref{lem:Struct} is that since
$\Delta_0(n)  \le z_L$, we also have $\Delta_0(m)  \le z_L$ for
$m=n/P(n)$. Thus, $m \in  \cG(x_L,z_L)$. Using Lemma~\ref{lem:saias}
and partial summation, we immediately get
\begin{align*}
\sum_{m\in \cG(x_L,z_L)}\frac{1}{m}
&\le \int_2^{x_L}\frac{d(\#\cG(t,z_L))}{t}
\le \frac{\#\cG(x_L,z_L)}{x_L}+
\int_2^{x_L}\frac{\#\cG(t,z_L)}{t^2} dt \\
&\ll \frac{\log z_L}{L} +\log
z_L\int_2^{x_L}\frac{dt}{t\log t}\\
& \ll   \log z_L \log\log x_L\ll L^{\alpha+\varepsilon} (\log L)^4,
\end{align*}
as $L\to\infty$. Thus,
\begin{equation}
\label{eq:T} \#\cT_{d,L}\ll \frac{x_L(\log L)^3}{L^2}\sum_{m\in
\cM_{d,L}} \frac{1}{m}\ll \frac{x_L(\log L)^7
L^{\alpha+\varepsilon}}{L^2}< \frac{x_L}{L^{2-\alpha-2\varepsilon}},
\end{equation}
when $L$ is sufficiently large. Combining
estimates~\eqref{eq:Ifinal}, \eqref{eq:S} and~\eqref{eq:T}, we get
that
\begin{equation}
\#\cH_{d,L}\le \#\cJ_{d,L}+\#\cS_{d,L}+\#\cT_{d,L}\ll
\frac{x_L}{L^{2-\alpha-2\varepsilon}}.
\end{equation}
Thus, returning to series~\eqref{eq:SL}, we get that
$$
S_L\le \sum_{d\le L^{1+\alpha}(\log
L)^2}\frac{1}{L^{2-2\alpha-2\varepsilon}}\ll \frac{\log
L}{L^{2-2\alpha-2\varepsilon}}.
$$
Since $\alpha<1/2$, we can choose $\varepsilon>0$ such that
$2-2\alpha-2\varepsilon>1$ and then the above arguments show that
$$
\sum_{n\ge 1}\frac{(\log n)^{\alpha}}{P(2^n-1)}\ll
1+\sum_{L}\frac{\log L}{L^{2-2\alpha-\varepsilon}}<\infty,
$$
which is the desired result.

\end{document}